\begin{document}

\title{\bf Full Orientability of the Square \\ of a Cycle}

\author{
Fengwei Xu, \  Weifan Wang
\thanks{\footnotesize  Corresponding author. Email: wwf@zjnu.cn;
Research supported partially by  NSFC (No. 10771197) and ZJNSF
(No. Z6090150)}\\
\normalsize Department of Mathematics\\
\normalsize Zhejiang Normal University, Jinhua 321004, China 
\and
Ko-Wei Lih\\
\normalsize Institute of Mathematics\\
\normalsize Academia Sinica,  Nankang, Taipei 11529, Taiwan}

\date{}

\maketitle

\newtheorem{define}{Definition}
\newtheorem{proposition}[define]{Proposition}
\newtheorem{theorem}[define]{Theorem}
\newtheorem{lemma}[define]{Lemma}
\newtheorem{remark}[define]{Remark}
\newtheorem{corollary}[define]{Corollary}
\newtheorem{problem}[define]{Problem}
\newtheorem{conjecture}[define]{Conjecture}
%
%
\newenvironment{proof}{
\par
\noindent {\bf Proof.}\rm}%
{\mbox{}\hfill\rule{0.5em}{0.809em}\par}

\baselineskip=16pt
\parindent=0.5cm


\begin{abstract}

\baselineskip=14pt

 \noindent Let $D$ be an acyclic orientation of
a simple graph $G$. An arc of $D$ is called {\em dependent} if its
reversal creates a directed cycle. Let $d(D)$ denote the number of
dependent arcs in $D$. Define $d_{\min}(G)$ ($d_{\max}(G)$) to be
the minimum (maximum) number of $d(D)$ over all acyclic
orientations $D$ of $G$. We call $G$ {\em fully orientable} if $G$
has an acyclic orientation with exactly $k$ dependent arcs for
every $k$ satisfying $d_{\min}(G)\leqslant k\leqslant
d_{\max}(G)$. In this paper, we prove that the square of a cycle
$C_n$ is fully orientable except $n=6$.

\medskip

\noindent{\em Key words:}\ \ Cycle; Square; Digraph; Acyclic
orientation; Full orientability

\medskip

\end{abstract}

\medskip

%
\section{Introduction}
%
\baselineskip=14pt

Only simple graphs are considered in this paper unless otherwise
stated. For a  graph $G$, we denote its vertex set and  edge set
by $V(G)$ and $E(G)$, respectively. An {\em orientation} $D$ of
$G$ assigns a direction to each edge of $G$. $D$ is called {\em
acyclic}  if there does not exist any directed cycle. Suppose that
$D$ is an acyclic orientation of $G$. An arc of $D$ is called {\em
dependent} if its reversal creates a directed cycle. Let $d(D)$
denote the number of dependent arcs of $D$. We use $d_{\min}(G)$
and $d_{\max}(G)$ to denote the minimum and maximum number of $d(D)$
over all acyclic orientations $D$ of $G$, respectively. It is known
\cite{fish}  that $d_{\max}(G)=|E(G)|-|V(G)|+ c$ for a graph $G$
having $c$ components.

An interpolation question asks whether $G$ has an acyclic
orientation with exactly $k$ dependent arcs for each $k$
satisfying $d_{\min}(G)\leqslant k\leqslant d_{\max}(G)$.
The graph $G$ is called {\em fully orientable} if its
interpolation question has an affirmative answer.  West
\cite{west} showed that complete bipartite graphs are
fully orientable.

A $k$-{\em coloring} of a graph $G$ is a mapping $f$ from
$V(G)$ to the  set $\{1,2,\ldots , $ $k\}$ such that $f(x)\ne
f(y)$ for each edge $xy\in E(G)$. The {\em chromatic number}
$\chi(G)$ is the smallest integer $k$ such that $G$ has a
$k$-coloring. The {\em girth} $g(G)$ is the minimum length
of a cycle in a graph $G$ if there is any, and $\infty$ if
$G$ possesses no cycles.

Fisher et al. \cite{fish} showed that $G$ is fully orientable if
$\chi(G) < g(G)$, and $d_{\min}(G)=0$ in this case.  Since it is
well-known \cite{gr} that every planar graph $G$ with
$g(G)\geqslant 4$ is 3-colorable, planar graphs of girth at least
4 are fully orientable.

The full orientability for several graph classes has been
investigated recently. Lih, Lin, and Tong \cite{lih1} showed that
outerplanar graphs are fully orientable. To generalize this
result, Lai, Chang, and Lih \cite{lai1}  proved  that 2-degenerate
graphs are fully orientable. Here a graph $G$ is called  2-{\em
degenerate} if every subgraph $H$ of $G$ contains a vertex of
degree at most 2 in $H$. Lai and Lih \cite{lai2} gave further
examples of fully orientable graphs, such as subdivisions of Halin
graphs and graphs of maximum degree at most three. Let $K_{r(n)}$
denote the complete $r$-partite graph each of whose partite sets
has $n$ vertices. Chang, Lin, and Tong \cite{chan} proved that
$K_{r(n)}$ is not fully orientable if $r\geqslant 3$ and
$n\geqslant 2$. These are the only known graphs that are not fully
orientable.

Suppose that $G$ is a connected graph. For $m\geqslant 2$, the
$m$th {\em power} of $G$, denoted $G^m$, is the graph defined by
$V(G^m)=V(G)$ and two distinct vertices $u$ and $v$ are adjacent
in $G^m$ if and only if their distance in $G$ is at most $m$. In
particular, $G^2$ is called the {\em square} of $G$.

It is well-known that a directed Hamiltonian path exists for any
acyclic orientation of the complete graph $K_n$ on $n$ vertices.
This implies that $d_{\min}(K_n)=d_{\max}(K_n)=\frac 12
(n-1)(n-2)$, hence $K_n$ is fully orientable (\cite{west}).
Throughout this paper, we use $C_n=v_0v_1 \cdots v_{n-1}v_0$ to
represent a cycle of length $n \geqslant 3$. It is  easy to see
that $C_n^2\cong K_n$ if $3\leqslant n\leqslant 5$, and hence is
fully orientable. If $n=6$, then $C_n^2\cong K_{3(2)}$. By the
result of \cite{chan}, $C_6^2$ is not fully orientable and
$d(D)\in \{4,6,7\}$ for any orientation $D$ of $C_6^2$. In this
paper, we shall prove that $C_n^2$ is fully orientable except
$n=6$.

%
\section{Results}
%

For a given graph $G$, let $\pi_{{}_T}(G)$ be the minimum number
of edges that can be deleted from $G$ so that the new graph is
triangle-free, i.e., having no $K_3$ as a subgraph.  The following
lemma appeared in \cite{lai1}.

\begin{lemma}\label{lem1}
For any graph $G$, $d_{\min}(G)\geqslant \pi_{{}_T}(G)$.
\end{lemma}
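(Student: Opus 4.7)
The plan is to show that for every acyclic orientation $D$ of $G$, the set of edges underlying the dependent arcs of $D$ is a triangle-killing edge set of $G$, i.e.\ its removal leaves a triangle-free graph. Once this is established, taking $D$ to be an acyclic orientation with $d(D)=d_{\min}(G)$ and letting $X$ be the set of (underlying) edges of the dependent arcs of $D$, we immediately obtain
\[
d_{\min}(G)=|X|\geqslant \pi_{{}_T}(G),
\]
by the definition of $\pi_{{}_T}(G)$.

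First I would fix an acyclic orientation $D$ of $G$ and let $X$ denote the edge set underlying the dependent arcs of $D$. To show that $G-X$ is triangle-free, I would argue by contradiction: suppose $uvw$ is a triangle of $G$ no edge of which lies in $X$. Because $D$ is acyclic, the three arcs of this triangle in $D$ cannot form a directed $3$-cycle, so the orientation on $\{u,v,w\}$ must be transitive; up to relabelling, we may assume the arcs are $u\to v$, $v\to w$, and $u\to w$.

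The key observation is that the arc $u\to w$ is then dependent: reversing it creates the directed $3$-cycle $w\to u\to v\to w$. Hence $uw\in X$, contradicting our assumption that no edge of the triangle lies in $X$. Thus $G-X$ is triangle-free, and the lemma follows.

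The argument is essentially a one-line observation about transitive triangles, so there is no real obstacle; the only point that must be handled carefully is the definition of \emph{dependent}, namely that the existence of a directed $u$-to-$w$ path of length at least $2$ in $D\setminus\{u\to w\}$ is exactly what makes $u\to w$ dependent, which in the transitive triangle is witnessed by the path $u\to v\to w$.
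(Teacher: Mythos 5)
Your proof is correct. The paper itself gives no proof of this lemma (it is quoted from the reference of Lai, Chang, and Lih), and your argument — every triangle in an acyclic orientation is transitive, and the ``long'' arc $u\to w$ of a transitive triangle $u\to v\to w$, $u\to w$ is dependent since its reversal closes the directed $3$-cycle $u\to v\to w\to u$ — is exactly the standard justification, correctly assembled into the bound $d(D)=|X|\geqslant \pi_{{}_T}(G)$ for the minimizing orientation $D$.
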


\begin{lemma}\label{lem2}
For $n\geqslant 7$, $\pi_{{}_T}(C_n^2)=\lceil \frac n2\rceil$.
\end{lemma}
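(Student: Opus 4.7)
The plan is to first classify the triangles of $C_n^2$ for $n \geq 7$, and then combine a simple counting lower bound with an explicit short-edge construction meeting it.

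To pin down the triangles, observe that any three vertices of $C_n^2$ split $C_n$ into three arcs of integer lengths $a, b, c \geq 1$ with $a + b + c = n$, and the $C_n^2$-distance between two of them equals the shorter of the two arcs joining them. So the three vertices form a triangle iff each of $a, b, c$ lies in $\{1, 2\}$ or is at least $n - 2$. When $n \geq 7$ the three arcs cannot all be at most $2$ (their sum would be at most $6$), so exactly one arc has length $n - 2$ and the other two each have length $1$. Hence the triangles of $C_n^2$ are precisely $T_i = \{v_i, v_{i+1}, v_{i+2}\}$ for $0 \leq i \leq n - 1$, giving exactly $n$ triangles.

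Call $v_iv_{i+1}$ a \emph{short} edge and $v_iv_{i+2}$ a \emph{long} edge. From the classification, each short edge $v_iv_{i+1}$ belongs to the two triangles $T_{i-1}$ and $T_i$, and each long edge $v_iv_{i+2}$ belongs to $T_i$ alone. Thus every edge lies in at most two triangles, so any set of edges whose removal destroys all $n$ triangles has cardinality at least $n/2$, hence at least $\lceil n/2 \rceil$. For the matching upper bound I would take the short edges $\{v_{2i}v_{2i+1} : 0 \leq i < \lfloor n/2 \rfloor\}$, adjoining the wrap-around edge $v_{n-1}v_0$ when $n$ is odd; a brief check against the explicit list of the $T_i$ shows that this set hits every triangle, yielding $\pi_{{}_T}(C_n^2) \leq \lceil n/2 \rceil$.

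The only substantive step is the triangle classification; the hypothesis $n \geq 7$ enters only there, and once the triangles are known to be exactly the consecutive triples, both inequalities follow immediately.
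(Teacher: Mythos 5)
Your proof is correct and follows essentially the same route as the paper: a counting lower bound from the facts that $C_n^2$ has exactly $n$ triangles each edge lying in at most two of them, matched by an explicit set of $\lceil n/2\rceil$ short edges hitting every triangle. The only difference is cosmetic — you spell out the arc-length classification of triangles that the paper merely asserts, and your deleting set is a shift of theirs.
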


\begin{proof}\ When $n\geqslant 7$, $C_n^2$ contains exactly $n$ distinct
triangles. Since every edge of $C_n^2$ lies in at most two triangles,
we have $\pi_{{}_T}(C_n^2)\geqslant \lceil \frac n2\rceil$.

On the other hand, let $S=\{v_1v_2,v_3v_4,\ldots ,v_{n-1}v_0\}$ if
$n$ is even, and $S=\{v_0v_1,v_1v_2$, $v_3v_4,\ldots$ ,
$v_{n-2}v_{n-1}\}$ if $n$ is odd. Obviously, $|S|=\lceil \frac
n2\rceil$ and $G-S$ is triangle-free. Thus,
$\pi_{{}_T}(C_n^2)\leqslant |S|=\lceil \frac n2\rceil$. 
\end{proof}

\bigskip

In a digraph $D$ with vertex set $V(D)$ and arc set $E(D)$, we
use $u\to v$ to denote the arc with tail $u$ and head $v$. The {\em
indegree} $d_D^-(v)$ of a vertex $v$ in $D$ is the number of arcs
with head $v$; the {\em outdegree} $d_D^+(v)$ of  $v$ in $D$ is
the number of arcs with tail $v$. Let $R(D)$ denote the set of
dependent arcs in $D$.

\begin{theorem}\label{thm1}
If $n\geqslant 7$, then $d_{\min}(C_n^2)=\pi_{{}_T}(C_n^2)+1$.
\end{theorem}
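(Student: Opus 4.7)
The plan is to establish the two inequalities $d_{\min}(C_n^2)\geqslant \pi_{{}_T}(C_n^2)+1$ and $d_{\min}(C_n^2)\leqslant \pi_{{}_T}(C_n^2)+1$.

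For the lower bound, Lemmas \ref{lem1} and \ref{lem2} give $d_{\min}(C_n^2)\geqslant \lceil n/2\rceil$, so I suppose for contradiction that $D$ is an acyclic orientation with $|R(D)|=\lceil n/2\rceil$. In every triangle the arc from its local source to its local sink is dependent (its reversal closes the 3-cycle through the triangle), so $R(D)$ meets each of the $n$ triangles of $C_n^2$, and since $|R(D)|=\pi_{{}_T}(C_n^2)$, it hits each triangle exactly once. For $n\geqslant 7$ a ``short'' edge $v_iv_{i+1}$ lies in two triangles and a ``long'' edge $v_iv_{i+2}$ in only one, so a direct count forces: when $n$ is even, $R(D)$ consists of $n/2$ short edges and, up to rotation, equals $\{v_1v_2,v_3v_4,\ldots,v_{n-1}v_0\}$; when $n$ is odd, $R(D)$ consists of $(n-1)/2$ short edges together with one long edge and, up to rotation, equals $\{v_0v_2,v_2v_3,v_4v_5,\ldots,v_{n-1}v_0\}$.

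The requirement that each edge of $R(D)$ be the source-to-sink arc of its containing triangle(s) identifies the ``middle'' vertex of each triangle, and once one arc of $R(D)$ is oriented, the orientation of every remaining arc is forced by propagation across consecutive triangles. For $n$ even the propagation yields the arcs $v_1\to v_3,\,v_3\to v_5,\ldots,v_{n-1}\to v_1$, a directed cycle on the odd-indexed vertices and a contradiction with acyclicity. For $n$ odd the propagation produces a directed path $v_0\to v_{n-2}\to v_{n-4}\to\cdots\to v_3\to v_1$ of length $(n-1)/2\geqslant 3$, while the orientation of $T_0$ forces the arc along $v_0v_1$; hence that arc is dependent, contradicting $v_0v_1\notin R(D)$. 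Reversing the initial direction choice and rotating to the other forms of $R(D)$ is handled by symmetry.

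For the upper bound I construct an acyclic orientation $D^*$ with exactly $\lceil n/2\rceil+1$ dependent arcs by pairing most consecutive triangles and leaving a few singletons. Specifically, for $k=0,1,\ldots,K-1$ with $K=(n-2)/2$ if $n$ is even and $K=(n-3)/2$ if $n$ is odd, I declare the short edge $v_{2k+1}v_{2k+2}$, oriented $v_{2k+1}\to v_{2k+2}$, to be the source-to-sink arc of both $T_{2k}$ and $T_{2k+1}$; for each of the remaining triangles I take a long edge as the source-to-sink, namely $v_{n-2}v_0$ for $T_{n-2}$, $v_{n-1}v_1$ for $T_{n-1}$ (and additionally $v_{n-3}v_{n-1}$ for $T_{n-3}$ when $n$ is odd). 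These prescriptions uniquely determine every arc of $D^*$ and produce exactly $\lceil n/2\rceil+1$ distinct source-to-sink arcs. It remains to verify (a)~$D^*$ is acyclic, which I would do by exhibiting a topological order of roughly the form $v_1,v_3,\ldots,v_{n-1},v_0,v_2,\ldots,v_{n-2}$ (with a small reshuffle in the odd case), and (b)~no arc outside the intended set of source-to-sinks is dependent, checked for each candidate arc $u\to v$ by showing that no $D^*$-predecessor of $v$ is reachable from $u$ along a path of length at least two.

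The main obstacle is the lower-bound propagation: one must trace the forced directions triangle by triangle and recognize the accumulated arcs either as the short cycle through odd-indexed vertices ($n$ even) or as the long path shortcutting one edge of $C_n$ ($n$ odd). The upper-bound verification is explicit but demands careful bookkeeping to ensure that, among the many non-source-to-sink arcs of $D^*$, none becomes dependent via a longer detour.
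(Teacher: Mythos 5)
Your overall strategy mirrors the paper's: show a minimum-size $R(D)$ would have to be a minimum triangle transversal of a very special form, derive a contradiction by propagating forced orientations, and then exhibit an explicit orientation attaining $\lceil n/2\rceil+1$. However, there is a genuine gap in your lower-bound argument for odd $n$. You assert that since $R(D)$ meets all $n$ triangles and has size $\pi_{{}_T}(C_n^2)=\lceil n/2\rceil$, it ``hits each triangle exactly once,'' and from this you conclude it must consist of $(n-1)/2$ short edges plus one long edge. For odd $n$ the count does not force this: $(n+1)/2$ edges, each lying in at most two triangles, can cover $n$ triangles with one triangle covered twice, so a transversal consisting entirely of short edges is possible --- indeed the set $S=\{v_0v_1,v_1v_2,v_3v_4,\ldots,v_{n-2}v_{n-1}\}$ used in the paper's own proof of Lemma~\ref{lem2} is exactly such a transversal. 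Your propagation argument for odd $n$ presupposes the presence of a long edge in $R(D)$ and therefore does not dispose of this case. The paper closes this hole with a parity observation you are missing: since $n$ is odd, the orientation of the Hamiltonian cycle $v_0v_1\cdots v_{n-1}v_0$ cannot alternate, so some vertex $v_i$ has $v_{i-1}\to v_i\to v_{i+1}$; acyclicity then forces $v_{i-1}\to v_{i+1}$, which is a dependent long arc, so $R(D)$ must contain a long edge after all (and then, by the count, exactly one). You need this step, or some substitute for it, before your case analysis is exhaustive. (Your even-$n$ count is fine, since $n/2$ edges covering $n$ triangles forces every edge to be short and every triangle to be hit once.)

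Two smaller points. First, your remark that ``reversing the initial direction choice \ldots is handled by symmetry'' is only automatic for the global reversal of all arcs; within a fixed direction of the long arc, the two possible directions of the neighbouring short arc genuinely lead to different contradictions (a directed $3$-cycle in one case, an extra dependent arc in the other, as in the paper), so at least a sentence of justification is needed. Second, your upper-bound construction is, up to a rotation of indices, the same orientation $D_0$ the paper writes down, but specifying only which arc of each triangle is its source-to-sink does not by itself determine the orientation (the directions of the designated arcs must be fixed and shown consistent across overlapping triangles), and the verification of acyclicity and of the exact dependent set is deferred rather than carried out. These are completable, but as written the proof is not self-contained.
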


\begin{proof}  In the first part, we are going to prove that
$d_{\min}(C_n^2) \geqslant \pi_{{}_T}(C_n^2)+1$. Assume to the
contrary that $d_{\min}(C_n^2)< \pi_{{}_T}(C_n^2)+1$. It follows
from Lemmas \ref{lem1} and \ref{lem2} that
$d_{\min}(C_n^2)=\pi_{{}_T}(C_n^2)= \lceil \frac n2\rceil$. Let $D$
be an acyclic orientation of $C_n^2$ with $d(D)=d_{\min}(C_n^2)$.
Let $F$ be the set of all underlying edges of the arcs in $R(D)$.
Thus, $|F|=\pi_{{}_T}(C_n^2)=\lceil \frac n2 \rceil$ and $C_n^2-F$
is triangle-free. We use $C$ to denote the closed walk
$v_0,v_1,\ldots ,v_{n-1},v_0$ in $D$.

The proof is divided into two cases, depending on the parity of $n$.

\medskip
\noindent{\bf Case 1.}\   Assume $n=2k$ for some $k\geqslant 4$.
\medskip

Since $C_n^2-F$ is triangle-free and $|F|=k$, it is easy to see from
the construction of $C_n^2$ that $F=\{v_1v_2,v_3v_4,\ldots ,
v_{n-1}v_0\}$ or $F=\{v_0v_1,v_2v_3,\ldots ,$ $v_{n-2}v_{n-1}\}$.
Without loss of generality, we may assume the former.

\medskip
\noindent{\bf Claim.}\  No $v\in V(D)$ satisfies
$d^+_C(v)=d^-_C(v)=1$.
\medskip

Assume to the contrary that we had $v_{i-1}\to v_i \to v_{i+1}$
(indices modulo $n$) for some $i$ in $D$. Then we would have $v_{i-1}
\to v_{i+1}$ in $D$ since $D$ is acyclic, and hence $v_{i-1}\to
v_{i+1}$ is dependent, contradicting the assumption that $v_{i-1}v_{i+1}
\notin F$.

\medskip

It follows from  the Claim  that every vertex $v\in V(D)$
satisfies $d^+_C(v)=0$ and $d^-_C(v)=2$ or $d^+_C(v)=2$ and
$d^-_C(v)=0$. Without loss of generality, we may suppose that
$d^+_C(v_i)=2$ and $d^-_C(v_i)=0$ for each odd $i$, and
$d^+_C(v_i)=0$ and $d^-_C(v_i)=2$ for each even $i$, i.e., $C$ is
oriented as $v_1 \to v_2\leftarrow v_3\to v_4\leftarrow  \cdots
\to v_{n-2}\leftarrow v_{n-1}\to v_0\leftarrow v_1$. Therefore,
$R(D)=\{v_1 \to v_2, v_3 \to v_4,\ldots , v_{n-1}\to v_0\}$. Since
$v_{i}\leftarrow v_{i+1}$ is not dependent for each even $i$, the
edge $v_{i-1}v_{i+1}$ must be directed as $v_{i-1}\to v_{i+1}$.
Consequently, a directed cycle $v_1 \to v_3 \to \cdots \to v_{n-1}
\to v_1$ is constructed, contradicting   the acyclicity of $D$.

\medskip
\noindent{\bf Case 2.}\  Assume $n=2k+1$ for some $k\geqslant 3$.
\medskip

In this case, $|R(D)|=|F|=k+1$. Note that $C_n^2-F$ is
triangle-free. Hence, $C_{n}^2$ has exactly $2k+1$ distinct triangles,
and every edge $v_iv_j$ belongs to exactly one triangle (or two
triangles) depending on the distance between $v_i$ and $v_j$ is
$2$ (or $1$) in $C$, then $F$ contains at least $k$ edges in $C$
and hence at most one edge outside $C$. Since $n$ is odd, there
must exist some $i$ such that $v_{i-1}\to v_i \to v_{i+1}$. So,
$v_{i-1} \to v_{i+1}$ is a dependent arc. Hence,   $F$ contains
exactly one edge outside $C$. We may assume that
$F=\{v_1v_2,v_3v_4$, $\ldots$, $v_{n-2}v_{n-1},$ $v_{n-1}v_1\}$.
We may also assume that $v_{n-1}\to v_1$. (The case for $v_1 \to
v_{n-1}$ can be handled in a similar way.)

We examine the direction of $v_1v_2$. First assume that $v_1\to
v_2$. Since $v_1\to v_2$ is the only arc of $F$ in the triangle $v_1
v_2 v_3 v_1$, we have $v_1 \to v_3 \to v_2$. Similarly, in the
triangle $v_2 v_3 v_4 v_2$, we have $v_2 \to v_4$ and $v_3\to v_4$.
In this way, it leads to $v_4\leftarrow v_5\to v_6 \leftarrow$
$\cdots$  $\leftarrow v_{n-2}\to v_{n-1} \leftarrow v_0$. If $v_1\to
v_0$, then a directed 3-cycle $v_1 \to v_0 \to v_{n-1} \to v_1$ is
produced. If $v_0\to v_1$, then $v_0\to v_1$ would be a dependent
arc of $D$. Contradictions are obtained in both cases.

Next, assume that $v_2\to v_1$. Since $v_2\to v_1$ is the only
arc of $F$ in the triangle $v_1 v_2 v_3 v_1$, we have $v_2 \to v_3 \to
v_1$. Similarly, in the triangle $v_2 v_3 v_4 v_2$, we have $v_4 \to
v_2$ and $v_4\to v_3$. In this way, it leads to $v_3\leftarrow v_4\to
v_5 \to v_6\leftarrow $ $\cdots$ $\leftarrow v_{n-1}\to v_{0}$.
Since $v_{n-1} \to v_0$ is not dependent, we have $v_0\to v_1$.
Since $v_2 \to v_3$ is not dependent, we have $v_3\to v_1$.
Similarly, we have $v_5\to v_3$; $v_7\to v_5$; $\cdots$ ; $v_0\to
v_{n-2}$; $v_2\to v_0$; $v_4\to v_2$;  $v_6\to v_4$; $\cdots$ ;
$v_{n-1}\to v_{n-3}$. However, the existence of the directed path
$v_0 \to v_{n-2} \to v_{n-4} \cdots \to v_5\to v_3 \to v_1$ makes
$v_0 \to v_1$ a dependent arc, contrary to our assumption.

\medskip

In the second part, we are going to prove that $d_{\min}(C_n^2)\leqslant
\pi_{{}_T}(C_n^2)+1$. In fact, an acyclic orientation $D_0$ of $G$ will
be constructed so that $d(D_0)=\pi_{{}_T}(C_n^2)+1$. The construction is
divided into two cases, depending on the parity of $n$.

\medskip
\noindent{\bf Case 1.}\  Assume $n=2k$ for some $k\geqslant 4$.
\medskip

Let $D_0$ be defined as follows.

$v_1 \to v_{n-1}$;  \  $v_1\to v_0$;  \ $v_2\to v_0 \to v_{n-1}$;
\ $v_{n-2} \to v_0$;

$v_{2i-1}\to v_{2i+1}$ for each $i=1,2,\ldots ,k-1$;

$v_{2i}\to v_{2i+2}$ for each $i=1,2,\ldots ,k-2$;

$v_{2i-1}\leftarrow v_{2i}\to v_{2i+1}$ for each $i=1,2,\ldots ,k-1$.

By a close examination, we can see that $D_0$ is an acyclic orientation
of $C_n^2$ such that $R(D_0)=\{v_1 \to v_{n-1}, v_2 \to v_0, v_2 \to v_3,
v_4 \to v_5, \ldots , v_{n-2} \to v_{n-1}\}$. Therefore, $d(D_0)=|R(D_0)|=k+1=
\pi_{{}_T}(C_n^2)+1$.

\medskip
\noindent{\bf Case 2.}\  Assume $n=2k+1$ for some $k\geqslant 3$.
\medskip

Let $D_0$ be defined as follows.

$v_2\to v_1\to v_{n-1}$; \ $v_3\to v_1  \to v_0$; \ $v_2 \to v_0 \to
v_{n-1}$; \ $v_{n-2} \to v_{n-1} $; \ $ v_{n-2} \to v_{0}$;

$v_{2i+1}\to v_{2i}\to v_{2i+2}$ for each $i=1,2,\ldots ,k-1$;

$v_{2i}\leftarrow v_{2i-1}\to v_{2i+1}$ for each $i=2,\ldots ,k-1$.

By a close examination, we can see that $D_0$ is an acyclic orientation
of $C_n^2$ such that $R(D_0)=\{v_3 \to v_1, v_1 \to v_{n-1}, v_2 \to v_0,
v_3 \to v_4, v_5 \to v_6, v_7 \to v_8, \ldots , v_{n-2} \to v_{n-1}\}$.
Therefore, $d(D_0)= |R(D_0)|=k+2=\pi_{{}_T}(C_n^2)+1$. This completes our
proof. 
\end{proof}

\begin{theorem}\label{thm2}
For $n\geqslant 7$, $C_n^2$ is fully orientable.
\end{theorem}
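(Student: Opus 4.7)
The plan is to exhibit, for every integer $k$ with $d_{\min}(C_n^2)\leqslant k\leqslant d_{\max}(C_n^2)$, an acyclic orientation of $C_n^2$ having exactly $k$ dependent arcs. Since $C_n^2$ is $4$-regular and connected for $n\geqslant 7$, it has $|E(C_n^2)|=2n$ edges, so the formula $d_{\max}(G)=|E(G)|-|V(G)|+c$ quoted in the introduction gives $d_{\max}(C_n^2)=n+1$. Combined with Theorem \ref{thm1}, the integers to realize are $k\in\{\lceil n/2\rceil+1,\lceil n/2\rceil+2,\ldots,n+1\}$, a total of $\lfloor n/2\rfloor+1$ values.

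The natural approach is to start from the orientation $D_0$ built in the second half of the proof of Theorem \ref{thm1}, whose dependent set has size exactly $d_{\min}(C_n^2)$, and then iteratively reverse carefully chosen arcs so that each single reversal increases $d(D)$ by exactly one while preserving acyclicity. The candidate arcs for reversal are the cycle edges $v_iv_{i+1}$ that are non-dependent in the current orientation; in $D_0$ one checks that roughly half of the $n$ cycle edges are eligible, and this is precisely enough to account for the $\lfloor n/2\rfloor$ additional dependent arcs needed to reach $d_{\max}(C_n^2)$. Each such edge lies in exactly two triangles of $C_n^2$, so a reversal has only local effects on dependency status, and acyclicity is preserved because the chord arcs of the form $v_{2i-1}\to v_{2i+1}$ that are built into $D_0$ form a backbone which continues to enforce a consistent topological ordering throughout the sequence.

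The principal obstacle is a uniform verification that the single-arc-reversal step produces a net change of exactly $+1$ in $d(D)$, rather than $0$ or $+2$, at every step of the chain $D_0\to D_1\to\cdots\to D_{\lfloor n/2\rfloor}$. For a chosen cycle edge $v_iv_{i+1}$ one has to track the four other arcs lying in the two incident triangles $v_{i-1}v_iv_{i+1}$ and $v_iv_{i+1}v_{i+2}$ and confirm that in each intermediate orientation none of them gains or loses its dependent status after the reversal, so that the change is localized entirely to the reversed arc itself. The analysis naturally splits along the parity of $n$, mirroring the two cases of Theorem \ref{thm1}: when $n=2k$ the cycle edges alternate cleanly between dependent and non-dependent in $D_0$, whereas when $n=2k+1$ the single extra chord edge present in $F$ forces a small local adjustment at the start of the sequence. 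Once this parity-by-parity case analysis is carried out, the interpolation claim, and hence the full orientability of $C_n^2$, follows at once.
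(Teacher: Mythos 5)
Your high-level strategy is the same as the paper's: take the extremal orientation $D_0$ from Theorem~\ref{thm1}, climb one unit at a time by reversing arcs, and let the result of \cite{fish} supply the top value $d_{\max}(C_n^2)=n+1$. But the proposal never carries out the climb; the entire content of the theorem is in the step you explicitly defer (``Once this parity-by-parity case analysis is carried out\ldots''), so as written this is a plan, not a proof. Worse, the mechanism you describe for that step cannot work. If $u\to v$ is a non-dependent arc of an acyclic orientation $D$ and you reverse it to obtain $D'$, then the new arc $v\to u$ is \emph{never} dependent in $D'$: reversing it back recovers $D$, which is acyclic, so by definition $v\to u$ is independent. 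Hence if, as you require, none of the other arcs changes its dependency status, then $d(D')=d(D)$ and you gain nothing. Any increase of $+1$ must come from \emph{other} arcs becoming dependent; this is exactly what happens in the paper, where for even $n$ reversing the chord $v_{2j-1}\to v_{2j+1}$ causes the distinct arc $v_{2j}\to v_{2j-1}$ to become dependent.

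Two further points. First, your locality claim --- that a reversal can only affect the four other arcs in the two triangles containing the reversed edge --- is unjustified: an arc is dependent iff it admits an alternative directed path of arbitrary length, so reversing one arc can create or destroy witness paths for arcs far away. The paper's own bookkeeping exhibits this (reversing $v_{2k-1}\to v_1$ changes the status of $v_{2k-5}\to v_{2k-3}$, which shares no triangle with the reversed edge). Second, you restrict the candidate reversals to cycle edges, whereas the paper reverses mostly chords, and near the top of the range ($s=n-1,\,n$ for even $n$) it must reverse several arcs simultaneously to achieve a net change of $+2-1=+1$; nothing in your proposal shows that single reversals of non-dependent cycle edges can reach those values. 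The counting of eligible edges (``roughly half'') does not substitute for verifying that each chosen reversal actually yields exactly $+1$, which is the whole difficulty.
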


\begin{proof} For every graph $G$, there exists an acyclic orientation
$D$ so that $d(D)=d_{\max}(G)$ in \cite{fish}. So the present
theorem is established if, for each integer $s$,
$\pi_{{}_T}(C_n^2)+1=m < s\leqslant n$, an acyclic orientation
$D_{s-m}$ of $C_n^2$ is constructed to satisfy $d(D_{s-m})=s$. In
fact, such a sequence of acyclic orientations $D_{s-m}$ can be
recursively constructed from the $D_0$ defined in the proof of
Theorem \ref{thm1}. We divide our construction into two cases,
depending on the parity of $n$.

\medskip
\noindent{\bf Case 1.}\  Assume $n=2k$ for some $k\geqslant 4$.
\medskip

By Lemma \ref{lem2}, $\pi_{{}_T}(C_n^2)=k$. First consider the range
$k+2 \leqslant s \leqslant 2k-2$. Assume that $D_{s-k-2}$ has
already been constructed.

Let $D_{s-k-1}$ be the acyclic orientation of $C_n^2$ obtained
from $D_{s-k-2}$ by reversing the arc $v_{2(s-k-1)-1} \to
v_{2(s-k-1)+1}$. It is easy to check that $R(D_{s-k-1})=R(D_{s-k-2})
\cup \{v_{2(s-k-1)} \to v_{2(s-k-1)-1}\}$. Hence $d(D_{s-k-1})$ 
$=d(D_{s-k-2})+1=s$.

If $s=2k-1$, let $D_{k-2}$ be the acyclic orientation of $C_n^2$
obtained from $D_{k-3}$ by reversing the arcs $v_1 \to v_{2k-1}, v_1
\to v_{0}, v_{2k-3} \to v_{2k-1}$. It is easy to check that
$R(D_{k-2})=R(D_{k-3})\setminus \{v_1 \to v_{2k-1}\}\cup \{v_{0} \to
v_{1}, v_{2k-2}\to v_{2k-3}\}$ and $d(D_{k-2})=d(D_{k-3})+2-1
=2k-1$.

If $s=2k$, let $D_{k-1}$ be the acyclic orientation of $C_n^2$
obtained from $D_{k-2}$ by reversing the arc $v_{2k-1} \to v_{1}$.
It is easy to check that $R(D_{k-1})=R(D_{k-2})\setminus \{v_{0}
\to v_1\}\cup \{v_{0} \to v_{2k-1}, v_{2k-5} \to v_{2k-3}\}$ and
$d(D_{k-1})=d(D_{k-2})+2-1=2k$.

\medskip
\noindent{\bf Case 2.}\  Assume $n=2k+1$ for some $k\geqslant 3$.
\medskip

By Lemma \ref{lem2}, $\pi_{{}_T}(C_n^2)=k+1$. First consider the
range $k+2 \leqslant s \leqslant 2k-2$. Assume that $D_{s-k-3}$ has
already been constructed.

Let $D_{s-k-2}$ be the acyclic orientation of $C_n^2$ obtained
from $D_{s-k-3}$ by reversing the arc $v_{2(s-k-2)} \to
v_{2(s-k-2)+2}$. It is easy to check that $R(D_{s-k-2})=R(D_{s-k-3})
\cup \{v_{2(s-k-2)+1} \to v_{2(s-k-2)}\}$. Hence $d(D_{s-k-2})$ 
$=d(D_{s-k-3})+1=s$.

If $s=2k$, let $D_{k-2}$ be the acyclic orientation of $C_n^2$
obtained from $D_{k-3}$ by reversing the arcs $v_{0} \to v_{2k}$. It
is easy to check that $R(D_{k-2})=R(D_{k-3})\setminus \{v_1 \to
v_{2k}\}\cup \{v_{2k-1} \to v_{0},$ $v_{1} \to v_{0}\}$ and
$d(D_{k-2})=d(D_{k-3})-1+2=2k$.

If $s=2k+1$, let $D_{k-1}$ be the acyclic orientation of $C_n^2$
obtained from $D_{k-3}$ by reversing the arc $v_{2k-2} \to
v_{2k}$. It is easy to check that $R(D_{k-1})=R(D_{k-3})\cup
\{v_{2k-1} \to v_{2k-2}$, $v_{2k-4} \to v_{2k-2}\}$ and
$d(D_{k-1})=d(D_{k-3})+2=2k+1$. 
\end{proof}

\bigskip

To conclude this paper, we would like to pose the following problem.

\medskip

\noindent {\bf Problem}.\ For any given integer $k\geqslant 2$,
does there exist a smallest constant $\alpha(k)$ such that $C_n^k$
is fully orientable whenever $n\geqslant \alpha(k)$?

\medskip

Since $C^k_{2k+2}\cong K_{(k+1)(2)}$ and $K_{(k+1)(2)}$ is not
fully orientable when $k\ge 2$ by the result of \cite{chan}, we have
$\alpha(k) \geqslant 2k+3$. The only solved case of this problem
is $\alpha(2)=7$ by Theorem \ref{thm2}.


\end{document}